%
\input ./style/arxiv-vmsta.cfg
\documentclass[numbers,compress,v1.0.1]{vmsta}
\usepackage{vtexbibtags}
\usepackage{authorquery}

\volume{5}
\issue{4}
\pubyear{2018}
\firstpage{429}
\lastpage{444}
\aid{VMSTA115}
\doi{10.15559/18-VMSTA115}
\articletype{research-article}



\startlocaldefs
\newcommand{\lleft}{\left}
\newcommand{\rright}{\right}

\newcommand{\rrvert}{\vert}
\newcommand{\llvert}{\vert}
\urlstyle{rm}
\allowdisplaybreaks

\newtheorem{thm}{Theorem}
\newtheorem{lem}{Lemma}
\theoremstyle{definition}
\newtheorem{defin}{Definition}
\theoremstyle{remark}
\newtheorem{rem}{Remark}
\endlocaldefs

\begin{aqf}
\querytext{Q1}{Note: it seems that 'solution of equation' more often in yout article.}
\end{aqf}
\begin{document}

\begin{frontmatter}
\pretitle{Research Article}

\title{Approximation of solutions of the stochastic wave equation by
using the Fourier series}

\author{\inits{V.}\fnms{Vadym}~\snm{Radchenko}\thanksref{cor1}\ead[label=e1]{vradchenko@univ.kiev.ua}\orcid{0000-0002-8234-2797}}
\thankstext[type=corresp,id=cor1]{Corresponding author.}
\author{\inits{N.}\fnms{Nelia}~\snm{Stefans'ka}\ead[label=e2]{neliastefanska@gmail.com}}
\address{Department of Mathematical Analysis,
\institution{Taras Shevchenko National University of~Kyiv}, Kyiv 01601,
\cny{Ukraine}}



\markboth{V. Radchenko, N. Stefans'ka}{Approximation of solutions of the stochastic wave equation by
using the Fourier series}

\begin{abstract}
A one-dimensional stochastic wave equation driven by a general
stochastic measure is studied in this paper. The Fourier series
expansion of stochastic measures is considered. It is proved that
changing the integrator by the corresponding partial sums or by Fej\`{e}r
sums we obtain the approximations of mild solution of the equation.
\end{abstract}

\begin{keywords}
\kwd{Stochastic measure}
\kwd{stochastic wave equation}
\kwd{mild solution}
\kwd{stochastic Fourier series}
\kwd{Fej\`{e}r sums}
\end{keywords}
\begin{keywords}[MSC2010]%
\kwd{60H15}
\kwd{60G57}
\end{keywords}

\received{\sday{28} \smonth{5} \syear{2018}}
\revised{\sday{30} \smonth{8} \syear{2018}}
\accepted{\sday{30} \smonth{8} \syear{2018}}
\publishedonline{\sday{19} \smonth{9} \syear{2018}}
\end{frontmatter}

\section{Introduction}
\label{scintro}

In this paper we study the Cauchy problem for a one-dimensional
stochastic wave equation driven by a general stochastic measure. We
consider solution of this problem in the mild sense (see \eqref{mweq1}
below). Our goal is to show the convergence of solutions of equations
using the approximation of stochastic measures by partial sums of
Fourier series and Fej\`{e}r sums.

Existence and uniqueness of the solution of\querymark{Q1} our equation are obtained
in~\cite{Bodnarchuk}. The wave equation driven by stochastic measure
defined on subsets of the spatial variable was considered in~\cite
{BodnarchukWaveInR}. Similar equations driven by random stable noises
are studied in \cite{PryharaShevchenkoWeqR2,PryharaShevchenkoWeqR3}, where the properties of generalized solutions
are investigated.

Convergence of solutions of stochastic wave equation by using the
approximation of stochastic integrator was studied in \cite
{DelgadoVencesSanzSole,DelgadoVencesSanzSoleII}. Mild solutions
of equations driven by the Gaussian random field in dimension three
were considered in these papers.

Approximation of stochastic measures may be obtained by using Fourier
and Fourier--Haar series, the corresponding results are given in \cite
{radt18,RadStef}. The partial sums of the resulting series
generate random functions of sets, which are signed measures for each
fixed $\omega\in\varOmega$. The resulting equations can be solved as a
nonstochastic equation for each $\omega$. The results of our paper
imply that we obtain in this way an approximation of the solution
of~\eqref{mweq1}.

Papers \cite{radt18,RadStef} contain 
examples of applying
Fourier series of stochastic measures to the convergence of
solutions of the stochastic heat equation. A similar application of the
Fourier transform is given in~\cite{rad16}. Continuous dependence of
solutions of wave equation upon the data was studied in~\cite
{BodnarchukWaveInR,Bodnarchuk}. In this paper we obtain a
continuous dependence upon the values of stochastic integrator of the equation.

The paper is organized as follows. In Section~\ref{scprob} we recall
the basic facts about stochastic measures and Fourier series. Important
auxiliary lemma concerning convergence of stochastic integrals is
proved in Section~\ref{scdodt}. The formulation of the Cauchy problem
and theorem about approximation of the solution by using Fourier
partial sums are given in Section~\ref{scnbfrw}. The similar
approximation that uses Fej\`{e}r sums is obtained in Section~\ref
{scnbfew}. Section~\ref{scexam} contains one example with comments
about the convergence rate.

\section{Preliminaries}
\label{scprob}

Let ${\sf X}$ be an arbitrary set and let $\mathcal{B}({\sf X})$ be a
$\sigma$-algebra of subsets of ${\sf X}$.
Let ${\sf L}_0(\varOmega,\mathcal{F},{\sf P})$ be the set of (equivalence
classes of) all real-valued
random variables defined on a complete probability space $(\varOmega
,\mathcal{F},{\sf P})$. The convergence in ${\sf L}_0(\varOmega,\mathcal
{F},{\sf P})$ is understood to be in probability.

\begin{defin}
A $\sigma$-additive mapping $\mu:\ \mathcal{B}\to{\sf L}_0$ is called
a {\em stochastic measure} (SM).
\end{defin}

We do not assume positivity or moment existence for $\mu$. In other
words, $\mu$ is a vector measure with values in ${\sf L}_0$.

For a deterministic measurable function $g:{\sf X}\to\mathbb{R}$ and
SM~$\mu$, an integral of the form $\int_{\sf X}g\,d\mu$ is defined and
studied in~\cite[Chapter 7]{Kwapien}, see also~\cite[Chapter
1]{RadchenkoInt}. In particular, every bounded
measurable $g$ is integrable with respect to any~$\mu$. Moreover, an
analogue of the Lebesgue dominated convergence theorem holds for this
integral (see \cite[Theorem 7.1.1]{Kwapien}, \cite
[Corollary~1.2]{RadchenkoInt}).

Important examples of SMs are orthogonal stochastic measures, $\alpha
$-stable random measures defined on a $\sigma$-algebra for $\alpha\in
(0,1)\cup(1,2]$ (see \cite[Chapter 3]{samtaq}). Conditions under which
a process with independent increments generates an SM may be found in
\cite[Chapter 7 and 8]{Kwapien}.

Let the random series $\sum_{n\ge1}\xi_n$ converge unconditionally in
probability, and $m_n$ be real signed measures on~${\mathcal{B}}$,
$|{\sf m}_n(A)|\le1$. Set $\mu(A)=\sum_{n\ge1}\xi_n {\sf m}_n(A)$.
Convergence of this series in probability follows from~\cite
[Theorem~V.4.2]{vahtar}, and $\mu$ is a SM by \cite[Theorem 8.6]{dretop}.

Many examples of the SMs on the Borel subsets of $[0,T]$ may be given
by the Wiener-type integral
\begin{equation}
\label{eqmuax} \mu(A)=\int_{[0,T]} {\mathbf1}_A(t)
\,dX_t.
\end{equation}

We note the following cases of processes $X_t$ in~\eqref{eqmuax} that
generate SM.
\begin{enumerate}

\item\label{itmart} $X_t$~-- any square integrable martingale.

\item\label{itfrbr} $X_t=W_t^H$~-- the fractional Brownian motion with
Hurst index $H>1/2$, see Theorem~1.1~\cite{memiva}.

\item\label{itsfrb} $X_t=S_t^k$~-- the sub-fractional Brownian motion
for $k=H-1/2,\ 1/2<H<1$, see Theorem 3.2~(ii) and Remark 3.3~c) in~\cite
{tudor09}.

\item\label{itrose} $X_t=Z_H^k(t)$~-- the Hermite process, $1/2<H<1$,
$k\ge1$, see~\cite{tudor07}.
\end{enumerate}

We will give another example. Let $\zeta$ be an arbitrary SM defined on
Borel subsets of $[a,b]\subset{\mathbb R}$, function $h:[0,T]\times
[a,b]\to{\mathbb R}$ be such that $h(0,y)=0$, and
\begin{equation}
\label{eqconf} \big|h(t,y)-h(s,x)\big|\le L\bigl(|t-s|+|y-x|^\gamma\bigr),\quad
\gamma>1/2,\quad L\in {\mathbb R}.
\end{equation}
Then $h(\cdot,y)$ is absolutely continuous for each $y$, $ |\frac
{\partial h(t, y)}{\partial t} |\le L$ a.~e., and we can define SM
\begin{equation}
\label{eqmuet} \mu(A)=\int_{[a,b]} \,d\zeta(y) \int
_{A}\dfrac{\partial h(t,
y)}{\partial t}\,dt,\quad A\in\mathcal{B}\bigl([0,T]
\bigr),
\end{equation}
see details in~\cite[Section~3]{radt06}. Note that Theorem~1 of \cite
{radt06} implies that the process
\begin{equation}
\label{eqmutp} \mu_t=\mu\bigl((0,t]\bigr)=\int_{[a,b]} h(t, y)
\,d\zeta(y),\quad t\in[0,T],
\end{equation}
has a continuous version. In this case the process $X_t=\mu_t$ in~\eqref
{eqmuax} defines an SM.

Let $\mathcal{B}$ be a Borel $\sigma$-algebra on $(0, 1]$. For
arbitrary SM $\mu$ on $\mathcal{B}$ we consider the Fourier series in
the following sense.

Denote
\begin{align}
\xi_k&=\int_{(0,1]}\exp \{-2\pi i k t
\}\,d\mu(t)\nonumber\\
&:=\int_{(0,1]} \cos( 2\pi k t)\,d\mu(t)- i \int
_{(0,1]} \sin(2\pi k t)\, d\mu(t), \quad k\in\mathbb{Z}.\label{koef}
\end{align}

\begin{defin}
The series
\begin{equation}
\label{fyr} \sum_{k\in\mathbb{Z}}\xi_k\exp \{2\pi
i k t \}
\end{equation}
is called the \emph{Fourier series of SM $\mu$}. The random variables
$\xi_k$ are called the \emph{Fourier coefficients of series~\eqref
{fyr}}. \emph{Partial sums of series~\eqref{fyr}} are given by
\begin{equation*}
S_{j}(t)=\sum_{|k|\le j} \xi_k
\exp \{2\pi i k t \},\quad j\in \mathbb{Z}_+,\quad t\in[0,1].
\end{equation*}
\end{defin}

Stochastic integrals on the right hand side of~\eqref{koef} are defined
for any $\mu$, since the integral functions are bounded.
Thus the Fourier series is well defined for every SM on $\mathcal{B}$.

We will also consider Fej\`{e}r sums for SM $\mu$:
\[
\tilde{S}_j(t)=\frac{1}{j+1}\sum_{0\le k\le j}S_{k}(t).
\]

For necessary information concerning the classical Fourier series, we
refer to~\cite{zygmund}. In the sequel, $C$ and
$C(\omega)$ will denote nonrandom and random constants respectively
whose exact value is not essential.

\section{Convergence of integrals}\label{scdodt}

Put
\[
\Delta_{kn}=\bigl((k-1)2^{-n}, k2^{-n}\bigr], \quad n\ge0,
\quad1\le k\le2^n.
\]

Let the function $g(z,s):Z\times[0,1]\rightarrow\mathbb{R}$
be such that $\forall z\in Z:\;g(z,\cdot)$ is continuous on $[0,1]$.
Here $Z=Z_0\times[0,1],\,Z_0$ is an arbitrary set, $z=(z_0,t)$.
Denote
\[
g^{(n)}(z,s)=g(z,0){\mathbf{1}}_{\{0\}}(s)+ \sum
_{1\le k\le
2^n}g\bigl(z,(k-1)2^{-n}\wedge t\bigr){
\mathbf{1}}_{\Delta_{kn}}(s).
\]

From \cite[Lemma 3]{RadchenkoHilbertSpace} it follows that the random function
\[
\eta(z,t)=\int_{(0,t]}g(z,s)d\mu(s),\quad z\in Z,
\]
has a version
\begin{equation}
\label{version} %
\begin{split} \widetilde{\eta}(z,t) & = \int
_{(0,t]}g^{(0)}(z,s)d\mu(s)
\\
&\quad + \sum_{n\ge1} \biggl(\int_{(0,t]}g^{(n)}(z,s)d
\mu(s)- \int_{(0,t]}g^{(n-1)}(z,s)d\mu(s) \biggr),
\end{split} %
\end{equation}
such that for all $\varepsilon>0,\,\omega\in\varOmega,\,z\in Z$
\begin{equation}
\begin{split}\label{eqestint} \bigl\llvert \widetilde{\eta}(z,t) \bigr
\rrvert & \le \bigl\llvert g(z,0) \mu\bigl((0,t]\bigr) \bigr\rrvert
\\
&\quad+ \biggl\{\sum_{n\ge1}2^{n\varepsilon}\sum
_{1\le k\le2^n} \bigl\llvert g\bigl(z,k2^{-n}\wedge t\bigr) -g
\bigl(z,(k-1)2^{-n}\wedge t\bigr) \bigr\rrvert ^2 \biggr\}^{\frac{1}{2}}
\\
&\quad\times \biggl\{\sum_{n\ge1}2^{-n\varepsilon}\sum
_{1\le k\le2^n} \bigl\llvert \mu\big(\Delta_{kn}
\cap(0,t] \big) \bigr\rrvert ^2 \biggr\}^{\frac{1}{2}}.
\end{split}
\end{equation}
We note that the series with the values of SM in \eqref{eqestint}
converges a. s. (see \cite[Lemma 3.1]{radt09}).

\begin{lem}\label{lmgjtg} Let $Z$ be an arbitrary set, $g,\ g_j:Z\times
[0,1]\to\mathbb{R}$, and the following conditions hold

(i) $\sup_{z\in Z,t\in[0,1]}|g_j(z,t)-g(z,t)|\to0$, $j\to\infty$;

(ii) for some constants $L_g>0$, $\beta(g)>1/2$
\[
\sup_{z\in Z,j\ge1}|g_j(z,t)-g_j(z,s)|\le
L_g |t-s|^{\beta(g)}\quad t,\ s\in[0,1];
\]

(iii) for some random constant $C_{\mu}(\omega)$
\[
|\mu((0,t])|\le C_{\mu}(\omega),\quad t\in(0,1].
\]
Then for versions~\eqref{version} of the processes
\[
\eta_j(z,t)=\int_{(0,t]}g_j(z,s)\,d
\mu(s),\qquad\eta(z,t)=\int_{(0,t]}g(z,s)\,d\mu(s)
\]
the following holds:
\begin{equation*}
\sup_{z\in Z,t\in[0,1]}\big|\eta_j(z,t)-\eta(z,t)\big|\to0\quad \textrm{a.
\ s.},\quad j\to\infty.
\end{equation*}
\end{lem}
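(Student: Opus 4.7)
The plan is to apply the pointwise estimate~\eqref{eqestint} to the version~\eqref{version} associated with the integrand $g_j - g$. By linearity of the stochastic integral and of the telescoping series defining~\eqref{version}, the difference $\widetilde\eta_j(z,t) - \widetilde\eta(z,t)$ coincides with the version of $\int_{(0,t]}(g_j-g)(z,s)\,d\mu(s)$ built from the continuous integrand $g_j - g$. Setting $\delta_j := \sup_{z,s}|g_j(z,s)-g(z,s)|$, which tends to $0$ by~(i), it suffices to bound the initial term and each of the two bracketed factors of~\eqref{eqestint}, applied to $g_j - g$, uniformly in $z \in Z$ and $t \in [0,1]$. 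The initial term is immediate: $|(g_j-g)(z,0)|\cdot|\mu((0,t])| \le \delta_j C_\mu(\omega) \to 0$.

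For the $\mu$-dependent bracket in~\eqref{eqestint}, I would split each inner sum over $k$ according to whether $\Delta_{kn}\subset(0,t]$, $\Delta_{kn}\cap(0,t]=\emptyset$, or $t$ lies in the interior of $\Delta_{kn}$ (at most one $k$ per~$n$). By~(iii) the exceptional term contributes at most $4 C_\mu^2(\omega)$ per level, while the remaining terms sum to at most $\sum_k |\mu(\Delta_{kn})|^2$. Since $\sum_n 2^{-n\varepsilon}\sum_k |\mu(\Delta_{kn})|^2 < \infty$ a.s.\ by \cite[Lemma~3.1]{radt09} (as noted just after~\eqref{eqestint}), this bracket is bounded by an a.s.\ finite random constant $C(\omega)$, independently of $t$, $z$, and $j$.

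The decisive step is to show that the $g$-dependent bracket in~\eqref{eqestint} evaluated at $g_j - g$ vanishes uniformly in $(z,t)$ as $j\to\infty$. Each increment of $g_j - g$ over two points at distance $\le 2^{-n}$ admits two bounds: the uniform bound $2\delta_j$ from~(i), and the H\"older bound $2L_g 2^{-n\beta(g)}$ from~(ii), which passes to the limit and so holds for $g$ as well. Interpolating via $\min(a,b)\le a^{1-\alpha}b^\alpha$ with $\alpha \in (0,1)$ bounds each squared increment by $(2\delta_j)^{2(1-\alpha)}(2L_g)^{2\alpha}2^{-2n\alpha\beta(g)}$; summing over the at most $2^n$ dyadic subintervals of level $n$ and then over $n$ with the weight $2^{n\varepsilon}$ reduces matters to the geometric series $\sum_n 2^{n(1+\varepsilon-2\alpha\beta(g))}$.

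Since $\beta(g)>1/2$, I can fix $\varepsilon>0$ small and $\alpha$ close enough to $1$ so that $1+\varepsilon-2\alpha\beta(g)<0$; the bracket is then bounded by $C\delta_j^{2(1-\alpha)} \to 0$. Combining this with the previous two estimates via~\eqref{eqestint} yields the claimed uniform a.s.\ convergence. The main technical obstacle is precisely this interpolation step: neither the uniform bound nor the H\"older bound alone is summable against the weight $2^{n\varepsilon}$ combined with the $2^n$ factor from counting dyadic intervals, and only the joint use of $\beta(g)>1/2$ and $\alpha<1$ allows one to absorb the $2^n$ factor while preserving a positive $j$-dependent decay.
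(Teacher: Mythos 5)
Your proposal is correct and follows essentially the same route as the paper's proof: apply the estimate~\eqref{eqestint} to the difference $g_j-g$ (the paper phrases this as assuming $g=0$ without loss of generality), bound the $\mu$-dependent bracket uniformly in $t$ by isolating the one dyadic interval containing $t$ and invoking (iii) together with the a.s.\ convergent series from \cite[Lemma~3.1]{radt09}, and control the $g$-dependent bracket by interpolating between the uniform bound $\delta_j$ and the H\"older bound, which is exactly the paper's product of \eqref{eqhjkn} to the power $\theta$ and \eqref{eqhjdj} to the power $1-\theta$ with $\frac{1}{2\beta(g)}<\theta<1$ and $0<\varepsilon<2\theta\beta(g)-1$. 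No substantive differences.
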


\begin{proof} Without loss of generality, we can assume that $g=0$. For
$\eta_j$ we will use inequality~\eqref{eqestint}. Separating for each
$n$ intervals $\Delta_{kn}$ that contain $t$ and using the condition
(iii), we have
\begin{align*}
&\sum_{n\ge1}2^{-n\varepsilon}\sum
_{1\le k\le2^n} \bigl\llvert \mu \bigl(\Delta_{kn}\cap(0,t] \bigr) \bigr
\rrvert ^2
\\
&\quad\le\sum_{n\ge1}2^{-n\varepsilon}\sum
_{1\le k\le2^n} \bigl\llvert \mu (\Delta_{kn} ) \bigr\rrvert
^2 +\sum_{n\ge
1}2^{-n\varepsilon}
\bigl(2C_{\mu}(\omega)\bigr)^2.
\end{align*}
Since both series in the right hand side are finite a. s., we obtain a
random upper bound uniformly in $t$.
Condition (ii) implies that
\begin{equation}
\label{eqhjkn} \sum_{1\le k\le2^n} \bigl\llvert
g_j\bigl(z,k2^{-n}\wedge t\bigr) -g_j
\bigl(z,(k-1)2^{-n}\wedge t\bigr) \bigr\rrvert ^2
\le2^n L_{g}2^{-2n\beta(g)}.
\end{equation}
Let $\sup_{z\in Z,t\in[0,1]}|g_j(z,t)|=\delta_j$, $\delta_j\to0$. Then
\begin{equation}
\label{eqhjdj} \sum_{1\le k\le2^n} \bigl\llvert
g_j\bigl(z,k2^{-n}\wedge t\bigr) -g_j
\bigl(z,(k-1)2^{-n}\wedge t\bigr) \bigr\rrvert ^2
\le2^n\cdot4 \delta_j^2.
\end{equation}
The product of \eqref{eqhjkn} to the power $\theta$ and \eqref{eqhjdj}
to the power $1-\theta$ now satisfies
\[
\sum_{1\le k\le2^n} \bigl\llvert g_j
\bigl(z,k2^{-n}\wedge t\bigr) -g_j\bigl(z,(k-1)2^{-n}
\wedge t\bigr) \bigr\rrvert ^2\le C 2^n 2^{-2n\theta\beta
(g)}
\delta_j^{2(1-\theta)}.
\]
For $\frac{1}{2\beta(h)}<\theta<1$, $0<\varepsilon<2\theta\beta(h)-1$
we have
\[
\sup_{z\in Z,t\in[0,1]}\sum_{n\ge1}2^{n\varepsilon}
\sum_{1\le k\le
2^n} \bigl\llvert g_j
\bigl(z,k2^{-n}\wedge t\bigr) -g_j\bigl(z,(k-1)2^{-n}
\wedge t\bigr) \bigr\rrvert ^2\le C \delta_j^{2(1-\theta)}.
\]
The right hand side of the inequality tends to zero as $j\to\infty$. Since
\[
\sup_{z\in Z,t\in[0,1],j\ge1} \bigl\llvert g_j(z,0) \mu\bigl((0,t]\bigr) \bigr
\rrvert \to 0,\quad j\to\infty,
\]
applying~\eqref{eqestint} completes the proof of the lemma.
\end{proof}

\section{Approximation of solutions by using the Fourier partial
sums}\label{scnbfrw}

Consider the Cauchy problem for a one-dimensional stochastic wave equation
\begin{equation}
\label{weq} \lleft\{ %
\begin{array}{l}
\dfrac{\partial^2 u(t,x)}{\partial t^2}
=a^2\dfrac{\partial^2 u(t,x)}{\partial x^2}
+f(t,x,u(t,x))+ \sigma(t,x)\,\dot{\mu}(t) ,\\
u(0,x)=u_0(x),\quad\dfrac{\partial u(0,x)}{\partial t}=v_0(x),
\end{array} %
\rright.
\end{equation}
where $(t,x)\in[0,1]\times{\mathbb{R}}$, $a>0$, $\mu$ is an SM defined
on the Borel $\sigma$-algebra $\mathcal{B}((0, 1])$.

The solution of equation \eqref{weq} is understood in the mild sense,
\begin{equation}
\begin{split}\label{mweq1} u(t,x) & =\frac{1}{2}
\bigl(u_0(x+at)-u_0(x-at)\bigr)+ \frac{1}{2a}\int
_{x-at}^{x+at} v_0(y)\,dy
\\
&\quad +\frac{1}{2a}\int_0^t\,ds\int
_{x-a(t-s)}^{x+a(t-s)} f\bigl(s,y, u(s,y)\bigr)\, dy
\\
& \quad+\frac{1}{2a}\int_{(0,t]}\,d\mu(s) \int
_{x-a(t-s)}^{x+a(t-s)}\sigma(s,y)\,dy\,. \end{split} %
\end{equation}
The integrals of random functions with respect to $dx$ are taken for
each fixed $\omega\in\varOmega$.
We impose the following assumptions.

\textbf{A1.} Functions $u_0(y)=u_0(y,
\omega):{\mathbb R}\times\varOmega\to{\mathbb R}$ and $v_0(y)=v_0(y,
\omega):{\mathbb R}\times\varOmega\to{\mathbb R}$ are measurable and
bounded for every fixed $\omega\in\varOmega$.

\textbf{A2.} The function $f(s, y, v):[0,1]\times{{\mathbb R}}\times
{\mathbb R}\to{\mathbb R}$ is measurable and bounded.

\textbf{A3.} The function $f(s, y, v)$ is uniformly Lipschitz in $y,
v\in\mathbb R$:
\[
\bigl\llvert f(s, y_1, v_1)-f(s, y_2,
v_2) \bigr\rrvert \le L_f \bigl( \llvert
y_1-y_2 \rrvert + \llvert v_1-v_2
\rrvert \bigr)\, .
\]

\textbf{A4.} The function $\sigma(s,y):[0, 1]\times{\mathbb R}\to
{\mathbb R}$ is measurable and bounded.

\textbf{A5.} The function $\sigma(s,y)$ is H\"{o}lder continuous:
\[
\bigl\llvert \sigma(s_1,y_1)-\sigma(s_2,y_2)
\bigr\rrvert \le L_\sigma \bigl( \llvert s_1-s_2
\rrvert ^{\beta(\sigma)}+ \llvert y_1-y_2 \rrvert
^{\beta(\sigma)} \bigr),\quad 1/2<\beta(\sigma)\le1.
\]

\textbf{A6.} For some random constant $C_{\mu}(\omega)$ $|\mu
((0,t])|\le C_{\mu}(\omega),\ t\in(0,1]$.

From A1--A5 it follows that equation \eqref{mweq1} has a unique
solution (see Theorem 2.1~\cite{Bodnarchuk}). The H\"{o}lder continuity
condition was imposed on $u_0$ in \cite{Bodnarchuk}, but was not used
in proof of existence and uniqueness of the solution.

Note that the processes $X_t$ in examples~\ref{itfrbr}--\ref{itrose} of
SMs and $\mu_t$ in~\eqref{eqmutp} are continuous, therefore A6 is
fulfilled in these cases.

Consider the following equations:
\begin{equation}
\begin{split}\label{mweq1nf} u_j(t,x) & =
\frac{1}{2}\bigl(u_0(x+at)-u_0(x-at)\bigr)+
\frac{1}{2a}\int_{x-at}^{x+at} v_0(y)
\,dy
\\
&\quad +\frac{1}{2a}\int_0^t\,ds\int
_{x-a(t-s)}^{x+a(t-s)} f\bigl(s,y, u_j(s,y)\bigr)
\,dy
\\
&\quad +\frac{1}{2a}\int_{(0,t]}S_j(s)\,ds \int
_{x-a(t-s)}^{x+a(t-s)}\sigma(s,y)\,dy\,. \end{split} %
\end{equation}

\begin{thm}\label{thnabf} Let A1--A6 are fulfilled, and assume that
the following conditions hold: if $h\in{\sf L}_2( (0, 1] )$ then $h$
is integrable with respect to $\mu$, and
\begin{equation}
\label{eqlpmu} {\it if}\quad \int_{(0, 1] } \big|h_j(x)\big|^2
\,dx\to0,\quad j\to\infty\quad{\it then}\quad \int_{(0, 1] }
h_j(x)\,d\mu(x)\stackrel{\sf P} {\to} 0,\quad j\to\infty.
\end{equation}

Then $u$ from~\eqref{mweq1} and $u_j$ from~\eqref{mweq1nf} have
versions such that for every $0<\delta<1$
\begin{equation}
\label{eqsupu} \sup_{x\in{\mathbb R},t\in[0,1-\delta]} \bigl\llvert u_j(t,x)-u(t,x)
\bigr\rrvert \stackrel{\sf P} {\to} 0,\quad j\to\infty.
\end{equation}
\end{thm}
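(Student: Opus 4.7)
The plan is to combine a Gronwall reduction on the Lipschitz $f$-term with a Fubini rewrite of the stochastic integral, and then address the resulting difference via Lemma~\ref{lmgjtg} after passing to a continuous periodic extension of the integrand.

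First I would set $G(t,x,s) := \int_{x-a(t-s)}^{x+a(t-s)}\sigma(s,y)\,dy$ and
$$D_j(t,x) := \frac{1}{2a}\int_{(0,t]}S_j(s)G(t,x,s)\,ds - \frac{1}{2a}\int_{(0,t]}G(t,x,s)\,d\mu(s).$$
Subtracting \eqref{mweq1} from \eqref{mweq1nf}, using A3 and the elementary estimate $\int_{x-a(t-s)}^{x+a(t-s)}|u_j-u|\,dy \le 2a\sup_y|u_j(s,y)-u(s,y)|$, one obtains $|u_j(t,x)-u(t,x)| \le L_f\int_0^t \sup_y|u_j(s,y)-u(s,y)|\,ds + |D_j(t,x)|$. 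Taking the supremum in $x$ and then in $t$ and applying Gronwall's inequality reduces \eqref{eqsupu} to proving $\sup_{t\le 1-\delta,\,x\in\mathbb R}|D_j(t,x)|\stackrel{\sf P}{\to}0$.

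Next, substituting $S_j(s) = \sum_{|k|\le j}\xi_k e^{2\pi iks}$ into the Lebesgue integral and interchanging the finite sum with the stochastic integral defining $\xi_k$ (valid by bilinearity) yields
$$\int_{(0,t]}S_j(s)G(t,x,s)\,ds = \int_{(0,1]}H_j(t,x,r)\,d\mu(r),$$
where $H_j(t,x,\cdot)$ is the $j$-th Fourier partial sum of the function $\tilde G(t,x,\cdot) := G(t,x,\cdot)\mathbf{1}_{(0,t]}(\cdot)$. Hence $D_j(t,x) = (2a)^{-1}\int_{(0,1]}[H_j - \tilde G]\,d\mu$, and the task becomes to show this stochastic integral tends to zero uniformly in $(t,x)$ in probability.

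The hard part will be that the 1-periodic extension of $\tilde G(t,x,\cdot)$ has a jump at $r=0$ of size $G(t,x,0)$, so Fourier partial sums do not converge uniformly and Lemma~\ref{lmgjtg} cannot be applied directly to $(H_j,\tilde G)$. I plan to overcome this by decomposing $\tilde G = \bar G - R$, where $\bar G(t,x,r)=G(t,x,r)$ on $[0,t]$ and $\bar G(t,x,r)=G(t,x,0)(r-t)/(1-t)$ on $[t,1]$; the periodic extension of $\bar G(t,x,\cdot)$ is then continuous, and A5 together with $t\le 1-\delta$ gives uniform $\beta(\sigma)$-H\"older continuity in $r$. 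Lemma~\ref{lmgjtg} is then applied to $\bar G$ and its Fourier partial sums $\bar H_j$ (Dini-Lipschitz for the uniform convergence required by condition (i), standard estimates on Fourier partial sums of H\"older functions for the uniform-in-$j$ H\"older continuity required by condition (ii)), yielding $\sup_{t,x}|\int(\bar H_j-\bar G)\,d\mu|\to 0$ almost surely. The correction $R(t,x,r)=G(t,x,0)\phi(t,r)$ with $\phi(t,r) := (r-t)(1-t)^{-1}\mathbf{1}_{(t,1]}(r)$ factors the $x$-dependence through the uniformly bounded multiplier $G(t,x,0)$; integration by parts gives $|\hat\phi_k(t)|\le C/|k|$ uniformly in $t\le 1-\delta$, hence $\sup_t\|\phi(t,\cdot)-\phi_j(t,\cdot)\|_{L^2}\to 0$; combining condition \eqref{eqlpmu} with the Lipschitz-in-$t$ dependence of $\phi(t,\cdot)$ in $L^2$ and a finite-net equicontinuity argument (using A6) lifts pointwise-in-$t$ convergence of $\int[\phi-\phi_j]\,d\mu$ to uniform-in-$t$ convergence. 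Feeding the resulting estimate back into the Gronwall step from paragraph two then yields \eqref{eqsupu}.
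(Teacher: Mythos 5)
Your overall architecture (Gronwall reduction, rewriting $\int_{(0,t]}S_j\,G\,ds$ as $\int_{(0,1]}H_j\,d\mu$ with $H_j$ the Fourier partial sums of the cut-off integrand, Lemma~\ref{lmgjtg} for the continuous part, and condition~\eqref{eqlpmu} for the jump part) is the same as the paper's; the difference is that you compensate the jump by a ramp on the $t$-dependent interval $[t,1]$, while the paper uses a ramp on a fixed interval $(1-\delta',1]$ and then lets $\delta'\to0$. The genuine gap is in your treatment of the correction term: you must show $\sup_{t\in[0,1-\delta]}\bigl|\int_{(0,1]}\bigl(\phi_j(t,\cdot)-\phi(t,\cdot)\bigr)\,d\mu\bigr|\stackrel{\sf P}{\to}0$, but the tools you name do not deliver uniformity in $t$. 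Condition~\eqref{eqlpmu} is a purely qualitative ${\sf L}_2\to{\sf L}_0$ continuity statement: together with $\sup_t\|\phi_j(t,\cdot)-\phi(t,\cdot)\|_{{\sf L}_2}\to0$ and the ${\sf L}_2$-Lipschitz dependence on $t$ it gives convergence in probability for each fixed $t$ (hence for the finitely many net points), and smallness in probability of each individual increment $\int\bigl(\phi_j(t,\cdot)-\phi_j(t_i,\cdot)-\phi(t,\cdot)+\phi(t_i,\cdot)\bigr)\,d\mu$ — but a supremum over the uncountably many $t$ in a net cell of random variables that are each small in probability need not be small. To close the finite-net argument you need a pathwise (a.s.) bound on these increments, uniform in $j$ and in $t$ within a cell, e.g.\ via the estimate~\eqref{eqestint} together with A6; that in turn requires uniform-in-$j$ H\"older bounds (exponent $>1/2$) for the partial sums $\phi_j(t,\cdot)-\phi_j(t_i,\cdot)$, i.e.\ a uniform boundedness result for the partial-sum operators on H\"older classes (a Privalov-type fact), which is nowhere in your sketch. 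A milder instance of the same issue is your appeal to ``standard estimates'' to verify condition (ii) of Lemma~\ref{lmgjtg} for $\bar H_j$, where the paper instead gives an explicit argument through the Fourier coefficients of the increment functions.

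For comparison, the paper sidesteps exactly this difficulty: because its ramp $h_{\delta'}(s)=\frac{s+\delta'-1}{\delta'}\mathbf{1}_{[1-\delta',1]}(s)$ does not depend on $(t,x)$, the jump correction factors as $g(t,x,0)$ (uniformly bounded) times the single random variable $\tilde C_{\delta' j}=\int_{(0,1]}h_{\delta' j}\,d\mu$, so~\eqref{eqlpmu} only has to be applied to a scalar sequence and no uniformity-in-$t$ question arises; the price is the residual terms $A_{\delta'}$ and $D_{\delta'}$, which are removed by the $\limsup_{j}$-then-$\delta'\to0$ seminorm argument at the end of the proof. So either supply the pathwise uniform-in-$j$ H\"older control needed for your $t$-dependent ramp, or switch to a $t$-independent modification as in the paper.
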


\begin{proof} The outline of the proof is the following. Denote
\[
g(t,x,s)=\int_{x-a(t-s)}^{x+a(t-s)}\sigma(s,y)\,dy,\quad0\le s
\le t.
\]
Step 1 -- using the Gronwall's inequality, we will estimate supremum in \eqref
{eqsupu} by the value
\[
\sup_{t,x} \left|\int_{(0,t]} g \,d\mu-\int
_{(0,t]} g S_j\,ds \right|.
\]
Further (Step 2), we consider the continuation of $g(t,x,s), 0\le s\le
t,$ to the function $g_{\delta}(t,x,s), 0\le s\le1$, and estimate
\[
\sup_{t,x} \left|\int_{(0,t]}g\,d\mu-\int
_{(0,1]} g_{\delta}\,d\mu \right|.
\]
In Step 3 we consider
\[
\sup_{t,x} \left|\int_{(0,1]} g_{\delta}\,d
\mu-\int_{(0,1]} g_{\delta
}S_j\,ds \right|,
\]
and in Step 4 we estimate
\[
\sup_{t,x} \left|\int_{(0,1]} g_{\delta}S_j
\,ds-\int_{(0,t]} g S_j\, ds \right|
\]
and make the concluding remarks.

\textbf{Step 1.} We will take versions~\eqref{version} for all
integrals with respect to $\mu$.
We have
\begin{align}
&\big|u(t,x)-u_j(t,x)\big|\nonumber
\\
&\quad\le\frac{1}{2a} \Bigg|\int_0^t\,ds\int
_{x-a(t-s)}^{x+a(t-s)} f\bigl(s,y, u(s,y)\bigr)\,dy\nonumber
\\
&\qquad -\int_0^t\,ds\int_{x-a(t-s)}^{x+a(t-s)}
f\bigl(s,y, u_j(s,y)\bigr)\,dy \Bigg|
\label{eqesuj} \\*
&\qquad+ \frac{1}{2a} \Bigg|\int_{(0,t]}g(t,x,s)\,d\mu(s) -\int
_{(0,t]} S_j(s)g(t,x,s)\,ds \Bigg|\nonumber
\\
&\quad \stackrel{A3} {\le} C \int_0^t\,ds \int
_{x-a(t-s)}^{x+a(t-s)} \big|u(s,y)-u_j(s,y)\big|\,dy\nonumber
\\
&\qquad +\frac{1}{2a} \Bigg|\int_{(0,t]}g(t,x,s)\,d\mu(s) -\int
_{(0,t]} S_j(s)g(t,x,s)\,ds\Bigg |. \nonumber
\end{align}\goodbreak

Denote
\begin{equation*}
\begin{split}\xi_j(t)&=\sup_{x\in{\mathbb R}}\big|u(t,x)-u_j(t,x)\big|,
\\
\eta_{\delta j}&= \frac{1}{2a}\sup_{x\in{\mathbb R},\ t\in[0,1-\delta
]} \left|\int
_{(0,t]}g(t,x,s)\,d\mu(s) -\int_{(0,t]}
S_j(s)g(t,x,s)\,ds \right|. \end{split} %
\end{equation*}
Then
\begin{equation}
\label{eqxijf} \xi_j(t)\le C \int_0^t
\xi_j(s)\,ds+\eta_{\delta j},\quad t\in [0,1-\delta].
\end{equation}
Applying the Gronwall's inequality, we get
\begin{equation}
\label{eqeste} \xi_j(t)\le\eta_{\delta j}+C\int
_0^t \exp\bigl\{C(t-s)\bigr\}
\eta_{\delta j}\, ds\le C\eta_{\delta j},\quad t\in[0,1-\delta].
\end{equation}
Further, we will estimate $\eta_{\delta j}$. In particular, we will get
that $\eta_{\delta j}<+\infty$~a.~s. From this, A2 and first inequality
in~\eqref{eqesuj} it follows that $\xi_j(t)\le C(\omega)<\infty$~a.~s.

\textbf{Step 2.}
Note that $g(t,x,t)=0$. We define the function
\[
g_{\delta}(t,x,s)=g(t,x,s),\quad 0\le s\le t,\quad g_{\delta}(t,x,s)=0,\quad
t\le s< 1-\delta.
\]
Put $g_{\delta}(t,x,1)=g(t,x,0)$ and extend $g_{\delta}(t,x,s)$ for
$s\in[1-\delta,1]$ in a linear way such that the function is
continuous on $[0,1]$. Also, $g_{\delta}(t,x,s)$ has a continuous
periodic extension on ${\mathbb R}$ in a variable $s$ for fixed $x\in
{\mathbb R},\ t\in[0,1)$.

First, consider
\[
A_{\delta}:=\sup_{t,x} \left|\int_{(0,t]}g(t,x,s)
\,d\mu(s) -\int_{(0,1]} g_{\delta}(t,x,s)\,d\mu(s) \right|.
\]
By the definition of the function $g_{\delta}$, we have
\begin{equation*}
\begin{split} &\int_{(0,1]}g_{\delta}(t,x,s)
\,d\mu(s)-\int_{(0,t]} g(t,x,s)\,d\mu (s)=\int
_{(1-\delta,1]} g_{\delta}(t,x,s) \,d\mu(s)
\\
&\quad=\int_{(1-\delta,1]} g_{\delta}(t,x,0)\frac{s+\delta-1}{\delta} \,d\mu
(s)= g_{\delta}(t,x,0) \int_{(1-\delta,1]} \frac{s+\delta-1}{\delta} \,
d\mu(s) \end{split} %
\end{equation*}

By the analogue of the Lebesgue theorem,
\begin{equation}
\label{eqdelg} \int_{(1-\delta,1]} \frac{s+\delta-1}{\delta} \,d\mu(s)
\stackrel{\sf P} {\to} \mu\bigl(\{1\}\bigr),\quad\delta\to0.
\end{equation}

Condition~\eqref{eqlpmu} provides that $\mu(\{1\})=0$ a. s., also
$|g_{\delta}(t,x,0) |\le C$. Therefore,
\begin{equation}
\label{eqadst} A_{\delta}\stackrel{\sf P} {\to} 0,\quad\delta\to0.
\end{equation}

\textbf{Step 3.}
Further, we will estimate
\[
B_{\delta j}:=\sup_{t,x} \left|\int_{(0,1]}
g_{\delta}(t,x,s) \,d\mu(s) -\int_{[0,1]}g_{\delta}(t,x,s)S_{j}(s)
\,ds \right|.
\]
We have
\begin{align}
&\int_{[0,1]}g_{\delta}(t,x,s)S_{j}(s)
\,ds = \int_{[0,1]}g_{\delta}(t,x,s) \biggl( \sum
_{|k|\le j} \xi_k \exp\{2\pi i k s\} \biggr)\,ds
\nonumber\\
&\quad=\int_{[0,1]}g_{\delta}(t,x,s) \biggl( \sum
_{|k|\le j}\exp\{2\pi i k s\} \int_{(0,1]} \exp\{-2
\pi i k r\}\,d\mu (r) \biggr)\,ds
\nonumber\\
&\quad=\sum_{|k|\le j}\int_{[0,1]}g_{\delta}(t,x,s)
\biggl( \exp\{2\pi i k s\} \int_{(0,1]} \exp\{-2\pi i k r\}\,d
\mu(r) \biggr)\,ds
\label{mlgdsj} \\
&\quad\stackrel{(*)} {=}\sum_{|k|\le j}\int
_{(0,1]} \exp\{-2\pi i k r\}\,d\mu(r)\int_{[0,1]}
\exp\{2\pi i k s\}g_{\delta}(t,x,s)\,ds
\nonumber\\
&\quad=\int_{(0,1]} \biggl(\sum_{|k|\le j}
\exp\{-2\pi i k r\}\int_{[0,1]}\exp \{2\pi i k s
\}g_{\delta}(t,x,s)\,ds \biggr)d\mu(r) . \nonumber
\end{align}\goodbreak
(We can change the order of integration in (*) due to Theorems 1 and
2~\cite{radpro}.) Partial sums of Fourier series of functions $g_{\delta
}(t,x,r)$ in variable $r$ are given by
\[
g_{\delta j}(t,x,r)=\sum_{|k|\le j}\exp\{-2\pi i k r
\}\int_{[0,1]}\exp\{ 2\pi i k s\}g_{\delta}(t,x,s)\,ds.
\]
We will demonstrate that for fixed $\delta$ for functions $g_{\delta
j}$, $g_{\delta}$ and $z=(t,x)$ the conditions of Lemma~\ref{lmgjtg} hold.

Consider the Fourier coefficients
\begin{equation*}
\begin{split} a_{-k}(t,x) & = \int_{[0,1]}
\exp\{2\pi i k s\}g_{\delta}(t,x,s)\,ds,
\\
a_{-k}'(t,x) & = \int_{[0,1]}\exp\{2\pi
i k s\}\dfrac{\partial g_{\delta
}(t,x,s)}{\partial s}\,ds=-2\pi ik a_{-k}(t,x). \end{split}
\end{equation*}
For any set of indices $\mathbb{M}\subset\mathbb{Z}\setminus\{0\}$ we have
\begin{equation}
\label{eqaktx} %
\begin{split} \sup_{t,x} \sum
_{k\in\mathbb{M}} |a_k| & = \dfrac{1}{2\pi}\sup
_{t,x} \sum_{k\in\mathbb{M}} \dfrac{|a_k'|}{|k|}
\le\dfrac{1}{2\pi}\sup_{t,x} \biggl( \sum
_{k\in\mathbb{M}} \big|a_k'\big|^2
\biggr)^{1/2} \biggl( \sum_{k\in
\mathbb{M}}
\dfrac{1}{k^2} \biggr)^{1/2}
\\
& \le\dfrac{1}{2\pi}\sup_{t,x} \left\|\dfrac{\partial g_{\delta
}(t,x,s)}{\partial s}
\right\|_{{\sf L}_2} \biggl(\sum_{k\in\mathbb{M}} \dfrac{1}{k^2}
\biggr)^{1/2}. \end{split} %
\end{equation}
Obviously, the supremum of ${\sf L}_2$-norms (taken in the variable
$s$) will be finite for fixed~$\delta$. Thus,
\[
\sup_{t,x,r}\big|g_{\delta j}(t,x,r)-g_{\delta l}(t,x,r)\big|\le
\sup_{t,x}\sum_{l<|k|\le j}|a_k|
\to0,\quad l,\ j\to\infty,
\]
and sequence $g_{\delta j}(t,x,r)$, $j\ge1$, converges uniformly in
$(t,x,r)$. Also, it is well known that for our piecewise smooth
function $g_{\delta}$ the pointwise convergence\break $g_{\delta j}(t,x,s)\to
g_{\delta}(t,x,s)$, $j\to\infty$, holds. Therefore, condition (i) of
Lemma~\ref{lmgjtg} is fullfiled.

Further, we will check condition (ii) for $\beta(g)\,{=}\,1$. Using the
periodicity of $g_{\delta}(t,x,s)$ in~$s$, for $\rho\in\mathbb{R}$ we obtain
\[
g_{\delta j}(t,x,r+\rho)=\sum_{|k|\le j}\exp\{-2\pi i
k r\}\int_{[0,1]}\exp\{2\pi i k s\}g_{\delta}(t,x,s+\rho)
\,ds.
\]
Therefore, $g_{\delta j}(t,x,r+\rho)-g_{\delta j}(t,x,r)$ are partial
sums of the Fourier series of the function $g_{\delta}(t,x,s+\rho
)-g_{\delta}(t,x,s)$. We can repeat the reasoning from~\eqref{eqaktx}
for $\mathbb{M}=\mathbb{Z}$ and
\[
a_{-k}=\int_{[0,1]}\exp\{2\pi i k s\}
\bigl(g_{\delta}(t,x,s+\rho)-g_{\delta
}(t,x,s)\bigr)\,ds.
\]
It is easy to see that
\[
\sup_{t,x} \left\|\dfrac{\partial(g_{\delta}(t,x,s+\rho)-g_{\delta
}(t,x,s))}{\partial s} \right\|_{{\sf L}_2}\le C\rho.
\]
Since
\[
\big|g_{\delta j}(t,x,r+\rho)-g_{\delta j}(t,x,r)\big|\le\sum
_{k\in\mathbb{Z}} |a_k| ,
\]
we get (ii).

Lemma~\ref{lmgjtg} implies that
\begin{equation}
\label{eqbdst} B_{\delta j}\to0\quad \textrm{a.\ s.},\quad j\to\infty,
\end{equation}
for each fixed $\delta$.

\textbf{Step 4.}
It remains to consider
\begin{equation}
\label{mlcdej} %
\begin{split} C_{\delta j}&:=\sup
_{t,x} \left|\int_{(0,1]}S_j(s)g_{\delta}(t,x,s)
\, ds-\int_{(0,t]}S_j(s)g(t,x,s)\,ds \right|
\\
&=\sup_{t,x} \left|\int_{(0,1]}S_j(s)g_{\delta}(t,x,s)
\,ds-\int_{(0,t]}S_j(s)g_{\delta}(t,x,s)\,ds \right|
\\
&=\sup_{t,x} \left|\int_{(1-\delta,1]}S_j(s)g_{\delta}(t,x,s)
\,ds \right|
\\
&=\sup_{t,x} \left|\int_{(1-\delta,1]}S_j(s)g(t,x,0)
\frac{s+\delta
-1}{\delta}\,ds \right|
\\
&= \sup_{t,x}\big|g(t,x,0)\big| \left|\int_{(1-\delta,1]}S_j(s)
\frac{s+\delta
-1}{\delta}\,ds \right|
\\
&\le C \left|\int_{(1-\delta,1]}S_j(s)\frac{s+\delta-1}{\delta}\,ds
\right|=: C |\tilde{C}_{\delta j} |. \end{split} %
\end{equation}
If we consider the function $h_{\delta}(s)=\frac{s+\delta-1}{\delta
}\mathbf{1}_{[1-\delta,1]}$ and its corresponding $j$-th Fourier sum
$h_{\delta j}(s)$, then, as in~\eqref{mlgdsj}, we have
\begin{equation}
\label{eqhdej} \int_{(1-\delta,1]}S_j(s)\frac{s+\delta-1}{\delta}
\,ds=\int_{(0,1]}h_{\delta j}(s)\,d\mu(s).
\end{equation}
By the standard properties of Fourier sums,
\[
h_{\delta j}(s)\to h_{\delta}(s), \quad j\to\infty,
\]
in ${\sf L}_2([0,1])$.
From~\eqref{eqlpmu} we get
\begin{equation}
\label{eqhdejd} \tilde{C}_{\delta j}=\int_{(0,1]}h_{\delta j}(s)
\,d\mu(s){\stackrel{\sf P} {\to}} \int_{(0,1]}h_{\delta}(s)
\,d\mu(s):=D_{\delta},\quad j\to\infty.
\end{equation}
We have already noticed in~\eqref{eqdelg} that
\begin{equation}
\label{eqddst} D_{\delta}{\stackrel{\sf P} {\to}} 0,\quad\delta\to0.
\end{equation}

Finally, we have
\begin{equation}
\label{eqeabc} \eta_{\delta j}\le A_{\delta}+B_{\delta j}+C_{\delta j}.
\end{equation}
In order to explain that $\eta_{\delta j}\stackrel{\sf P}{\to} 0$, we
will use the seminorm
\[
\|\eta\|=\sup\bigl\{\alpha: {\sf P}\bigl(|\eta|\ge\alpha\bigr)\ge\alpha\bigr\},
\]
that corresponds to the convergence in ${\sf L}_0$. If \eqref{eqsupu}
does not hold then
\begin{equation}
\label{eqetal} \|\eta_{\delta j}\|\ge\alpha_0
\end{equation}
for some $\delta,\alpha_0>0$ and infinitely many $j$.

We have
\begin{equation*}
\begin{split} \|\eta_{\delta j}\|&\stackrel{\eqref{eqeabc}} {\le}
\|A_{\delta}\|+\| B_{\delta j}\|+\|C_{\delta j}\|\stackrel{
\eqref{mlcdej}} {\le} \| A_{\delta}\|+\|B_{\delta j}\|+\|C
\tilde{C}_{\delta j}\|
\\
&\le\|A_{\delta}\|+\|B_{\delta j}\|+\big\|C (\tilde{C}_{\delta j}-D_{\delta
})
\big\|+\|C D_{\delta}\|. \end{split} %
\end{equation*}
From \eqref{eqbdst} and~\eqref{eqhdejd} it follows that for each $\delta$
\begin{equation*}
{\limsup_{j\to\infty}}\|\eta_{\delta j}\| \le\|A_{\delta}
\|+\|C D_{\delta}\|,
\end{equation*}
\eqref{eqadst} and \eqref{eqddst} imply that
\begin{equation*}
\lim_{\delta\to0}{\limsup_{j\to\infty}}\|
\eta_{\delta j}\|=0.
\end{equation*}
This contradicts to~\eqref{eqetal} (the reduction of $\delta$ given in
the formulation of the theorem reinforces the assertion).
\end{proof}

\begin{rem}\label{rmtone} Note that condition~\eqref{eqlpmu} holds for
examples of SMs~\ref{itfrbr} and \ref{itsfrb} (see \cite{memiva,tudor09}).
If~\eqref{eqlpmu} is fulfilled for SM $\zeta$ in~\eqref
{eqmuet} then it holds for $\mu$. This follows from the boundedness of
$\frac{\partial h(t, y)}{\partial t}$ and properties of the integral,
see~\cite[Chapter 1]{RadchenkoInt}.

In our proof condition~\eqref{eqlpmu} was used only for convergence
in~\eqref{eqhdejd} and for equality $\mu(\{1\})=0$ a. s. If for given
$h_{\delta j}$ and~$\mu$ these statements hold true then the general
condition~\eqref{eqlpmu} can be discarded.
\end{rem}

In the next section, we will demonstrate that replacing partial sums of
the Fourier series by the corresponding Fej\`{e}r sums, we can omit
condition~\eqref{eqlpmu}.

\section{Approximation of solutions by using the Fej\`{e}r sums}\label{scnbfew}

Consider the following equations that use the Fej\`{e}r sums $\tilde
{S}_j(s)$ of SM $\mu$:
\begin{equation}
\begin{split}\label{mweq1nfe} \tilde{u}_j(t,x) & =
\frac{1}{2}\bigl(u_0(x+at)-u_0(x-at)\bigr)+
\frac{1}{2a}\int_{x-at}^{x+at} v_0(y)
\,dy
\\
& \quad+\frac{1}{2a}\int_0^t\,ds\int
_{x-a(t-s)}^{x+a(t-s)} f\bigl(s,y, \tilde {u}_j(s,y)
\bigr)\,dy
\\
& \quad+\frac{1}{2a}\int_{(0,t]}\tilde{S}_j(s)
\,ds \int_{x-a(t-s)}^{x+a(t-s)}\sigma(s,y)\,dy\,. \end{split}
\end{equation}

We show that the functions $\tilde{u}_j$ also approximate the solution
$u$ of equation~\eqref{mweq1}. Here we impose weaker conditions on $\mu
$ than in Theorem~\ref{thnabf}.

\begin{thm} Let A1--A6 hold. Then $u$ from~\eqref{mweq1} and $\tilde
{u}_j$ from~\eqref{mweq1nfe} have versions such that for every $0<\delta<1$
\begin{equation}
\label{eqsupuf} \sup_{x\in\mathbb{R},t\in[0,1-\delta]} \bigl\llvert \tilde
{u}_j(t,x)-u(t,x) \bigr\rrvert \stackrel{\sf P} {\to} 0,\quad j\to
\infty.
\end{equation}
\end{thm}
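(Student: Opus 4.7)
My plan is to adapt the proof of Theorem~\ref{thnabf} by replacing the Fourier partial sums $S_j$ with the Fejér sums $\tilde S_j$. The decisive gain is that the Fejér kernel is a nonnegative summability kernel of unit mass, so the Fejér sum $\tilde f_j$ of a bounded measurable $f$ satisfies $\|\tilde f_j\|_\infty\le\|f\|_\infty$ uniformly in $j$, and $\tilde f_j\to f$ uniformly whenever $f$ is continuous and periodic (Fejér's theorem). These two uniform-in-$j$ controls will substitute for the $L_2$-type hypothesis~\eqref{eqlpmu} used in Theorem~\ref{thnabf}.

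Steps~1--3 follow Theorem~\ref{thnabf} with only cosmetic changes. Subtracting~\eqref{mweq1nfe} from~\eqref{mweq1}, applying A3 and Gronwall's inequality reduces matters to showing $\tilde\eta_{\delta j}:=\sup_{x,\,t\in[0,1-\delta]}|\int_{(0,t]}g\,d\mu-\int_{(0,t]}g\tilde S_j\,ds|\stackrel{\sf P}{\to}0$, where $g(t,x,s)=\int_{x-a(t-s)}^{x+a(t-s)}\sigma(s,y)\,dy$. Introduce the continuous periodic extension $g_\delta$ of $g$ from Step~2 of Theorem~\ref{thnabf} and split $\tilde\eta_{\delta j}\le A_\delta+\tilde B_{\delta j}+\tilde C_{\delta j}$. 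For $\tilde B_{\delta j}$, the exchange of integration as in~\eqref{mlgdsj} rewrites $\int_{[0,1]}g_\delta\tilde S_j\,ds=\int_{(0,1]}\tilde g_{\delta j}\,d\mu$, where $\tilde g_{\delta j}$ is the Fejér sum of $g_\delta(t,x,\cdot)$. Fejér's theorem supplies the uniform convergence required by condition~(i) of Lemma~\ref{lmgjtg}, while convolution with the nonnegative Fejér kernel is a contraction in $L_\infty$ and preserves Lipschitz moduli, giving condition~(ii) with $\beta(g)=1$ uniformly in $j$. Lemma~\ref{lmgjtg} then yields $\tilde B_{\delta j}\to 0$ a.s.\ as $j\to\infty$.

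The substantive new work is Step~4. Following the computation~\eqref{mlcdej}--\eqref{eqhdej} with $\tilde S_j$ in place of $S_j$ one obtains $\tilde C_{\delta j}\le C|\tilde{\tilde C}_{\delta j}|$ with $\tilde{\tilde C}_{\delta j}=\int_{(0,1]}\tilde h_{\delta j}\,d\mu$, where $\tilde h_{\delta j}$ is the Fejér sum of $h_\delta(s)=\frac{s+\delta-1}{\delta}\mathbf{1}_{(1-\delta,1]}(s)$. Since $\|\tilde h_{\delta j}\|_\infty\le\|h_\delta\|_\infty=1$ uniformly in $j$ and $\tilde h_{\delta j}\to h_\delta$ pointwise at continuity points, the stochastic analogue of Lebesgue's dominated convergence theorem (\cite[Thm~7.1.1]{Kwapien}, \cite[Cor.~1.2]{RadchenkoInt}) applies directly --- no $L_2$-compatibility of $\mu$ is required --- and delivers convergence of $\tilde{\tilde C}_{\delta j}$ in probability as $j\to\infty$. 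Combined with the analogous DCT analysis of the bounded cutoffs $\frac{s+\delta-1}{\delta}\mathbf{1}_{(1-\delta,1]}$ controlling $A_\delta$ and $D_\delta$ as $\delta\to 0$, the seminorm argument at the end of Theorem~\ref{thnabf} closes the proof by the same contradiction scheme.

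The hard point is precisely Step~4: the partial-sum argument of Theorem~\ref{thnabf} had only the $L_2$-convergence $h_{\delta j}\to h_\delta$ at its disposal, which forced the invocation of~\eqref{eqlpmu} to pull the convergence through the stochastic integral. Replacing partial sums by Fejér sums installs the built-in uniform $L_\infty$ bound of the Fejér kernel, so that pointwise convergence alone is enough to drive the stochastic dominated convergence theorem. Verifying that this kernel also preserves the Lipschitz bound on $g_\delta$ uniformly in $j$ (for the Step~3 application of Lemma~\ref{lmgjtg}) and propagating the pieces through the seminorm bookkeeping are the remaining technical points.
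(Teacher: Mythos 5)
Your proposal is correct in substance and follows the paper's overall architecture (Gronwall reduction to $\tilde{\eta}_{\delta j}$, the passage to the periodic extension $g_{\delta}$, and the treatment of the boundary term via the Fej\`{e}r sums $\tilde{h}_{\delta j}$ of $h_{\delta}$ together with the analogue of the dominated convergence theorem), but it handles the middle term by a genuinely different route. The paper never re-runs Lemma~\ref{lmgjtg} for Fej\`{e}r sums: it writes $\int_{(0,t]}g\,d\mu-\int_{(0,t]}\tilde{S}_j g\,ds$ as the Ces\`{a}ro average $\frac{1}{j+1}\sum_{0\le k\le j}\bigl(\int_{(0,t]}g\,d\mu-\int_{(0,t]}S_k g\,ds\bigr)$ and reuses the bounds $A_{\delta}+B_{\delta k}$ already established in Theorem~\ref{thnabf} (Ces\`{a}ro means of an a.s.\ null sequence are null), which is legitimate because, as Remark~\ref{rmtone} notes, condition~\eqref{eqlpmu} was never used for $B_{\delta k}$. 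You instead verify the hypotheses of Lemma~\ref{lmgjtg} directly for the Fej\`{e}r means of $g_{\delta}(t,x,\cdot)$, exploiting positivity and unit mass of the Fej\`{e}r kernel for the $L_\infty$-contraction, uniform convergence, and preservation of the modulus of continuity; this is equally valid and self-contained. One small correction there: under A5 the function $g_{\delta}(t,x,\cdot)$ is only H\"{o}lder of order $\beta(\sigma)\le 1$ in $s$ (plus a Lipschitz piece), not necessarily Lipschitz, so condition (ii) should be claimed with $\beta(g)=\beta(\sigma)>1/2$ rather than $\beta(g)=1$; since the kernel preserves any modulus of continuity, Lemma~\ref{lmgjtg} still applies. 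Your Step~4 coincides with the paper's.

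One point you pass over silently, and which the paper states explicitly: the closing limits as $\delta\to 0$ require $\mu(\{1\})=0$ a.s. The Fej\`{e}r sums $\tilde{h}_{\delta j}$ converge to $h_{\delta}$ only on $(0,1)$ (at the jump $s=1$ they converge to $1/2$), so identifying $\lim_j\int_{(0,1]}\tilde{h}_{\delta j}\,d\mu$ with $D_{\delta}=\int_{(0,1]}h_{\delta}\,d\mu$ already uses $\mu(\{1\})=0$; moreover, by \eqref{eqdelg}, $A_{\delta}$ and $D_{\delta}$ tend in probability to quantities proportional to $\mu(\{1\})$, not to $0$, unless $\mu(\{1\})=0$ a.s. In Theorem~\ref{thnabf} this was extracted from \eqref{eqlpmu}; here that hypothesis is dropped, and the paper's proof invokes "the condition $\mu(\{1\})=0$ a.s." at exactly this spot. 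Your appeal to "the analogous DCT analysis as $\delta\to 0$" needs the same condition; you should state it (or derive it from an added assumption), since A1--A6 alone do not exclude an atom of $\mu$ at $1$.
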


\begin{proof} We use the notation from the proof of Theorem~\ref{thnabf}.
As in~\eqref{eqxijf}, we get
\[
\tilde{\xi}_j(t)\le C \int_0^t
\tilde{\xi}_j(s)\,ds+\tilde{\eta }_{\delta j},\quad t\in[0,1-
\delta],
\]
where
\begin{equation*}
\begin{split} \tilde{\xi}_j(t) & =\sup
_{x\in\mathbb{R}}\big|u(t,x)-\tilde{u}_j(t,x)\big|,
\\
\tilde{\eta}_{\delta j} & = \frac{1}{2a}\sup_{x\in\mathbb{R},\ t\in
[0,1-\delta]} \left|
\int_{(0,t]}g(t,x,s)\,d\mu(s) -\int_{(0,t]}
\tilde {S}_j(s)g(t,x,s)\,ds \right|. \end{split} %
\end{equation*}
The Gronwall's inequality implies that
\begin{equation}
\label{eqestt} \tilde{\xi}_j(t)\le C\tilde{\eta}_{\delta j},
\quad t\in[0,1-\delta].
\end{equation}
We will estimate $\tilde{\eta}_{\delta j}$. Consider
\begin{equation*}
\begin{split} & \int_{(0,t]}g(t,x,s)\,d\mu(s) -\int
_{(0,t]}\tilde{S}_j(s)g(t,x,s) \, ds
\\
&\quad =\frac{1}{j+1}\sum_{0\le k\le j} \biggl(\int
_{(0,t]}g(t,x,s)\,d\mu(s) -\int_{(0,t]}
{S}_k(s)g(t,x,s)\,ds \biggr). \end{split} %
\end{equation*}

Similarly to the estimates of $\eta_{\delta j}$ in Theorem~\ref
{thnabf}, we have
\begin{equation*}
\begin{split} \tilde{\eta}_{\delta j} & \le\frac{1}{j+1}\sum
_{0\le k\le j}(A_{\delta
}+B_{\delta k})
\\
&\quad +\sup_{t,x} \left|\int_{(0,1]}
\tilde{S}_j(s)g_{\delta}(t,x,s)\,ds-\int_{(0,t]}
\tilde{S}_j(s)g(t,x,s)\,ds \right|. \end{split} %
\end{equation*}
As in~\eqref{mlcdej}, we obtain
\begin{equation*}
\begin{split} &\sup_{t,x} \left|\int_{(0,1]}
\tilde{S}_j(s)g_{\delta}(t,x,s)\,ds-\int_{(0,t]}
\tilde{S}_j(s)g(t,x,s)\,ds \right|
\\
&\quad\le C \left|\int_{(1-\delta,1]}\tilde{S}_j(s)
\frac{s+\delta-1}{\delta
}\,ds \right|. \end{split} %
\end{equation*}

Taking the sum of terms \eqref{eqhdej}, we get
\[
\int_{(1-\delta,1]}\tilde{S}_j(s)\frac{s+\delta-1}{\delta}\,ds=
\int_{(0,1]}\tilde{h}_{\delta j}(s)\,d\mu(s).
\]
Here the functions
\[
\tilde{h}_{\delta j}(s)=\frac{1}{j+1}\sum_{0\le k\le j}
{h}_{\delta k}(s)
\]
are the Fej\`{e}r sums of function $h_{\delta}(s)=\frac{s+\delta
-1}{\delta}\mathbf{1}_{[1-\delta,1]}$. By well-known properties, $\tilde
{h}_{\delta j}(s)$ are uniformly bounded for every $\delta$ and $\tilde
{h}_{\delta j}(s)\to\tilde{h}_{\delta}(s)$ for every $s\in(0,1)$. Therefore,
\[
\int_{(0,1]}\tilde{h}_{\delta j}(s)\,d\mu(s)\stackrel{\sf
P} {\to} \int_{(0,1]}\tilde{h}_{\delta}(s)\,d\mu(s), \quad
j\to\infty,
\]
(here we used the condition $\mu(\{1\})=0$ a. s.). It remains to repeat
the reasoning in the proof of Theorem~\ref{thnabf} carried out
after~\eqref{eqhdej}.
\end{proof}

\section{Example}\label{scexam}

We obtained that solution of~\eqref{mweq1} is approximated by solutions
of~\eqref{mweq1nf} and~\eqref{mweq1nfe}. Equations~\eqref{mweq1nf}
and~\eqref{mweq1nfe} may be considered as nonstochastic for each fixed
$\omega$, properties of the solutions $u_j$ and $\tilde{u}_j$ follows
from the theory of deterministic wave equation.

Also, in some cases the rate of convergence in~\eqref{eqsupu} and~\eqref
{eqsupuf} may be estimated. By \eqref{eqeste} and \eqref{eqestt}, we
need to estimate $\eta_{\delta j}$ and $\tilde{\eta}_{\delta j}$ respectively.

As an example, consider SM $\mu$ given by \eqref{eqmuet} and \eqref
{eqmutp} provided that condition \eqref{eqconf} holds. Assumption~A6 is
fulfilled in this case because $\mu_t$ has a continuous version, A1--A5
are assumed as before. Recall that if~\eqref{eqlpmu} is fulfilled
for SM $\zeta$ in~\eqref{eqmuet} then it holds for $\mu$ (see
Remark~\ref{rmtone}).

In addition, assume that for some $L>0$, $\gamma>1/2$ and all $t,y_1,y_2$
\[
\left|\dfrac{\partial h(t, y_1)}{\partial t}-\dfrac{\partial h(t,
y_2)}{\partial t} \right|\le L|y_1-y_2|^\gamma.
\]
Then from Theorems~1 and 2~\cite{radpro} we obtain that we can change
the order of integration in~\eqref{eqmuet}, and
\begin{equation}
\label{eqmuah} \mu(A)=\int_{A}\,dt \int_{[a,b]}
\dfrac{\partial h(t, y)}{\partial t}\, d\zeta(y).
\end{equation}
The Fourier coefficients of $\mu$ are
\begin{equation*}
\begin{split} \xi_k&=\int_{(0,1]}\exp
\{-2\pi i k t \}\,d\mu(t) =\int_{(0,1]}\exp \{-2\pi i k t \}\,dt
\int_{[a,b]} \dfrac
{\partial h(t, y)}{\partial t}\,d\zeta(y)
\\
&\stackrel{(*)} {=}\int_{[a,b]} \,d\zeta(y) \int
_{(0,1]}\exp \{-2\pi i k t \}\dfrac{\partial h(t, y)}{\partial t}\,dt=\int
_{[a,b]} c_k(y)\,d\zeta(y) \end{split} %
\end{equation*}
where in (*) we again use Theorems~1 and 2~\cite{radpro}, $c_k(y)$
denotes the Fourier series coefficient of $\frac{\partial h(\cdot,
y)}{\partial t}$.

Therefore, the partial Fourier sums and Fej\`{e}r sums for this SM are
\begin{equation}
\label{eqsjtx} S_{j}(t)=\int_{[a,b]}
S_{j}^{(h)}(t,y)\,d\zeta(y),\quad\tilde {S}_{j}(t)=
\int_{[a,b]} \tilde{S}_{j}^{(h)}(t,y)\,d
\zeta(y),
\end{equation}
where $S_{j}^{(h)}(\cdot,y)$ and $\tilde{S}_{j}^{(h)}(\cdot,y)$ are
respectively Fourier and Fej\`{e}r sums of function $\frac{\partial
h(\cdot, y)}{\partial t}$ for each fixed $y$.
To estimate $\eta_{\delta j}$, consider
\begin{equation*}
\begin{split} &\int_{(0,t]}g(t,x,s)\,d\mu(s) -\int
_{(0,t]} S_j(s)g(t,x,s)\,ds
\\
&\stackrel{\eqref{eqmuah},\eqref{eqsjtx}} {=}\int_{(0,t]}
g(t,x,s)\, ds \int_{[a,b]} \dfrac{\partial h(s, y)}{\partial s}\,d\zeta(y)
\\
&\qquad- \int_{(0,t]} g(t,x,s) \,ds \int_{[a,b]}
S_{j}^{(h)}(s,y)\,d\zeta(y)
\\
&\quad=\int_{(0,t]}g(t,x,s)\, ds \int_{[a,b]}
\biggl(\dfrac{\partial h(s,
y)}{\partial s}-S_{j}^{(h)}(s,y) \biggr)\,d\zeta(y).
\end{split} %
\end{equation*}
The integral with respect to $\zeta$ may be estimated by~\eqref{eqestint}.
For the value of
\[
\dfrac{\partial h(s, y)}{\partial s}-S_{j}^{(h)}(s,y)
\]
we can find numerous results in the theory of classical Fourier series.
For example, if $h$ is smooth enough, we obtain $O(j^{-1}\ln j)$, see
\cite[Theorem (10.8) of Chapter II]{zygmund}. The detailed calculations
is not the subject of this paper.

Analogous considerations may be carried out for Fej\`{e}r sums, $\tilde
{\eta}_{\delta j}$, and $\tilde{S}_{j}^{(h)}(t,x)$.





\begin{thebibliography}{22}

\bibitem{BodnarchukWaveInR}
%
\begin{barticle}
\bauthor{\bsnm{Bodnarchuk}, \binits{I.}}:
\batitle{Mild solution of the wave equation with a general random measure}.
\bjtitle{Visnyk Kyiv University. Mathematics and Mechanics (in Ukrainian)}
\bvolume{24},
\bfpage{28}--\blpage{33}
(\byear{2010})
\end{barticle}
%
%
\OrigBibText
%
\begin{barticle}
\bauthor{\bsnm{Bodnarchuk}, \binits{I.}}:
\batitle{Mild solution of the wave equation with a general random measure}.
\bjtitle{Visnyk Kyiv University. Mathematics and Mechanics (in Ukrainian)}
\bvolume{24},
\bfpage{28}--\blpage{33}
(\byear{2010})
\end{barticle}
%
\endOrigBibText
\bptok{structpyb}%
\endbibitem

\bibitem{Bodnarchuk}
%
\begin{barticle}
\bauthor{\bsnm{Bodnarchuk}, \binits{I.M.}}:
\batitle{Wave equation with a stochastic measure}.
\bjtitle{Theory Probab. Math. Statist.}
\bvolume{94},
\bfpage{1}--\blpage{16}
(\byear{2017})
\bid{doi={10.1090/tpms/1005}, mr={3553450}}
\end{barticle}
%
%
\OrigBibText
%
\begin{barticle}
\bauthor{\bsnm{Bodnarchuk}, \binits{I.M.}}:
\batitle{Wave equation with a stochastic measure}.
\bjtitle{Theory Probab. Math. Statist.}
\bvolume{94},
\bfpage{1}--\blpage{16}
(\byear{2017})
\end{barticle}
%
\endOrigBibText
\bptok{structpyb}%
\endbibitem

\bibitem{DelgadoVencesSanzSole}
%
\begin{barticle}
\bauthor{\bsnm{Delgado-Vences}, \binits{F.J.}},
\bauthor{\bsnm{Sanz-Sol\'{e}}, \binits{M.}}:
\batitle{Approximation of a stochastic wave equation in dimension
three, with application to a support theorem in {H}\"{o}lder norm}.
\bjtitle{Bernoulli}
\bvolume{20},
\bfpage{2169}--\blpage{2216}
(\byear{2014})
\bid{doi={10.3150/13-BEJ554}, mr={3263102}}
\end{barticle}
%
%
\OrigBibText
%
\begin{barticle}
\bauthor{\bsnm{Delgado-Vences}, \binits{F.J.}},
\bauthor{\bsnm{Sanz-Sol\'{e}}, \binits{M.}}:
\batitle{Approximation of a stochastic wave equation in dimension
three, with application to a support theorem in {H}\"{o}lder norm}.
\bjtitle{Bernoulli}
\bvolume{20},
\bfpage{2169}--\blpage{2216}
(\byear{2014})
\end{barticle}
%
\endOrigBibText
\bptok{structpyb}%
\endbibitem

\bibitem{DelgadoVencesSanzSoleII}
%
\begin{barticle}
\bauthor{\bsnm{Delgado-Vences}, \binits{F.J.}},
\bauthor{\bsnm{Sanz-Sol\'{e}}, \binits{M.}}:
\batitle{Approximation of a stochastic wave equation in dimension three,
with application to a support theorem in {H}\"{o}lder norm: The non-stationary case}.
\bjtitle{Bernoulli}
\bvolume{22},
\bfpage{1572}--\blpage{1597}
(\byear{2016})
\bid{doi={10.3150/15-BEJ704}, mr={3474826}}
\end{barticle}
%
%
\OrigBibText
%
\begin{barticle}
\bauthor{\bsnm{Delgado-Vences}, \binits{F.J.}},
\bauthor{\bsnm{Sanz-Sol\'{e}}, \binits{M.}}:
\batitle{Approximation of a stochastic wave equation in dimension
three, with
application to a support theorem in {H}\"{o}lder norm: The non-stationary
case}.
\bjtitle{Bernoulli}
\bvolume{22},
\bfpage{1572}--\blpage{1597}
(\byear{2016})
\end{barticle}
%
\endOrigBibText
\bptok{structpyb}%
\endbibitem

\bibitem{dretop}
%
\begin{barticle}
\bauthor{\bsnm{Drewnowski}, \binits{L.}}:
\batitle{Topological rings of sets, continuous set functions, integration.~{III}}.
\bjtitle{Bull. Acad. Pol. Sci. S\'{e}r. sci. math. astron. phys.}
\bvolume{20},
\bfpage{439}--\blpage{445}
(\byear{1972})
\bid{mr={0316653}}
\end{barticle}
%
%
\OrigBibText
%
\begin{barticle}
\bauthor{\bsnm{Drewnowski}, \binits{L.}}:
\batitle{Topological rings of sets, continuous set functions,
integration.~{III}}.
\bjtitle{Bull. Acad. Pol. Sci. S\'{e}r. sci. math. astron. phys.}
\bvolume{20},
\bfpage{439}--\blpage{445}
(\byear{1972})
\end{barticle}
%
\endOrigBibText
\bptok{structpyb}%
\endbibitem

\bibitem{Kwapien}
%
\begin{bbook}
\bauthor{\bsnm{Kwapie\'{n}}, \binits{S.}},
\bauthor{\bsnm{Woyczy\'{n}ski}, \binits{W.A.}}:
\bbtitle{Random Series and Stochastic Integrals: Single and Multiple}.
\bpublisher{Birkh\"{a}user},
\blocation{Boston}
(\byear{1992})
\bid{doi={10.1007/978-1-4612-\\0425-1}, mr={1167198}}
\end{bbook}
%
%
\OrigBibText
%
\begin{bbook}
\bauthor{\bsnm{Kwapie\'{n}}, \binits{S.}},
\bauthor{\bsnm{Woyczy\'{n}ski}, \binits{W.A.}}:
\bbtitle{Random Series and Stochastic Integrals: Single and Multiple}.
\bpublisher{Birkh\"{a}user},
\blocation{Boston}
(\byear{1992})
\end{bbook}
%
\endOrigBibText
\bptok{structpyb}%
\endbibitem

\bibitem{tudor07}
%
\begin{barticle}
\bauthor{\bsnm{Maejima}, \binits{M.}},
\bauthor{\bsnm{Tudor}, \binits{C.A.}}:
\batitle{Wiener integrals with respect to the {H}ermite process and a non-central limit theorem}.
\bjtitle{Stochastic Analysis and Applications}
\bvolume{25}(\bissue{5}),
\bfpage{1043}--\blpage{1056}
(\byear{2007})
\bid{doi={10.1080/07362990701540519}, mr={2352951}}
\end{barticle}
%
%
\OrigBibText
%
\begin{barticle}
\bauthor{\bsnm{Maejima}, \binits{M.}},
\bauthor{\bsnm{Tudor}, \binits{C.A.}}:
\batitle{Wiener integrals with respect to the {H}ermite process and a
non-central limit theorem}.
\bjtitle{Stochastic Analysis and Applications}
\bvolume{25}(\bissue{5}),
\bfpage{1043}--\blpage{1056}
(\byear{2007})
\end{barticle}
%
\endOrigBibText
\bptok{structpyb}%
\endbibitem

\bibitem{memiva}
%
\begin{barticle}
\bauthor{\bsnm{M\'{e}min}, \binits{J.}},
\bauthor{\bsnm{Mishura}, \binits{Y.}},
\bauthor{\bsnm{Valkeila}, \binits{E.}}:
\batitle{Inequalities for the moments of {W}iener integrals with
respect to a fractional {B}rownian motion}.
\bjtitle{Statist. Probab. Lett.}
\bvolume{51},
\bfpage{197}--\blpage{206}
(\byear{2001})
\bid{doi={10.1016/S0167-7152(00)00157-7}, mr={1822771}}
\end{barticle}
%
%
\OrigBibText
%
\begin{barticle}
\bauthor{\bsnm{M\'{e}min}, \binits{J.}},
\bauthor{\bsnm{Mishura}, \binits{Y.}},
\bauthor{\bsnm{Valkeila}, \binits{E.}}:
\batitle{Inequalities for the moments of {W}iener integrals with
respect to a
fractional {B}rownian motion}.
\bjtitle{Statist. Probab. Lett.}
\bvolume{51},
\bfpage{197}--\blpage{206}
(\byear{2001})
\end{barticle}
%
\endOrigBibText
\bptok{structpyb}%
\endbibitem

\bibitem{PryharaShevchenkoWeqR2}
%
\begin{barticle}
\bauthor{\bsnm{Pryhara}, \binits{L.}},
\bauthor{\bsnm{Shevchenko}, \binits{G.}}:
\batitle{Stochastic wave equation in a plane driven by spatial stable noise}.
\bjtitle{Mod. Stoch. Theory Appl.}
\bvolume{3},
\bfpage{237}--\blpage{248}
(\byear{2016})
\bid{doi={\\10.15559/16-VMSTA62}, mr={3576308}}
\end{barticle}
%
%
\OrigBibText
%
\begin{barticle}
\bauthor{\bsnm{Pryhara}, \binits{L.}},
\bauthor{\bsnm{Shevchenko}, \binits{G.}}:
\batitle{Stochastic wave equation in a plane driven by spatial stable noise}.
\bjtitle{Mod. Stoch. Theory Appl.}
\bvolume{3},
\bfpage{237}--\blpage{248}
(\byear{2016})
\end{barticle}
%
\endOrigBibText
\bptok{structpyb}%
\endbibitem

\bibitem{PryharaShevchenkoWeqR3}
%
\begin{barticle}
\bauthor{\bsnm{Pryhara}, \binits{L.}},
\bauthor{\bsnm{Shevchenko}, \binits{G.}}:
\batitle{Wave equation with stable noise}.
\bjtitle{Teor. Imovir. Matem. Statist.}
\bvolume{96},
\bfpage{142}--\blpage{154}
(\byear{2017})
\bid{mr={3666878}}
\end{barticle}
%
%
\OrigBibText
%
\begin{barticle}
\bauthor{\bsnm{Pryhara}, \binits{L.}},
\bauthor{\bsnm{Shevchenko}, \binits{G.}}:
\batitle{Wave equation with stable noise}.
\bjtitle{Teor. Imovir. Matem. Statist.}
\bvolume{96},
\bfpage{142}--\blpage{154}
(\byear{2017})
\end{barticle}
%
\endOrigBibText
\bptok{structpyb}%
\endbibitem

\bibitem{RadchenkoInt}
%
\begin{bbook}
\bauthor{\bsnm{Radchenko}, \binits{V.}}:
\bbtitle{Integrals with Respect to General Stochastic Measures}.
\bpublisher{Proceedings of Institute of Mathematics, National Academy of
Science of Ukraine (in Russian)},
\blocation{Kyiv}
(\byear{1999})
\end{bbook}
%
%
\OrigBibText
%
\begin{bbook}
\bauthor{\bsnm{Radchenko}, \binits{V.}}:
\bbtitle{Integrals with Respect to General Stochastic Measures}.
\bpublisher{Proceedings of Institute of Mathematics, National Academy of
Science of Ukraine (in Russian)},
\blocation{Kyiv}
(\byear{1999})
\end{bbook}
%
\endOrigBibText
\bptok{structpyb}%
\endbibitem

\bibitem{radpro}
%
\begin{barticle}
\bauthor{\bsnm{Radchenko}, \binits{V.}}:
\batitle{On the product of a random and a real measure}.
\bjtitle{Theory Probab. Math. Statist.}
\bvolume{70},
\bfpage{197}--\blpage{206}
(\byear{2005})
\bid{doi={10.1090/S0094-9000-05-\\00639-3}, mr={2110872}}
\end{barticle}
%
%
\OrigBibText
%
\begin{barticle}
\bauthor{\bsnm{Radchenko}, \binits{V.}}:
\batitle{On the product of a random and a real measure}.
\bjtitle{Theory Probab. Math. Statist.}
\bvolume{70},
\bfpage{197}--\blpage{206}
(\byear{2005})
\end{barticle}
%
\endOrigBibText
\bptok{structpyb}%
\endbibitem

\bibitem{radt09}
%
\begin{barticle}
\bauthor{\bsnm{Radchenko}, \binits{V.}}:
\batitle{Sample functions of stochastic measures and {B}esov spaces}.
\bjtitle{Theory Probab. Appl.}
\bvolume{54},
\bfpage{160}--\blpage{168}
(\byear{2010})
\bid{doi={10.1137/\\S0040585X97984048}, mr={2766653}}
\end{barticle}
%
%
\OrigBibText
%
\begin{barticle}
\bauthor{\bsnm{Radchenko}, \binits{V.}}:
\batitle{Sample functions of stochastic measures and {B}esov spaces}.
\bjtitle{Theory Probab. Appl.}
\bvolume{54},
\bfpage{160}--\blpage{168}
(\byear{2010})
\end{barticle}
%
\endOrigBibText
\bptok{structpyb}%
\endbibitem

\bibitem{RadchenkoHilbertSpace}
%
\begin{barticle}
\bauthor{\bsnm{Radchenko}, \binits{V.}}:
\batitle{Evolution equations driven by general stochastic measures in {H}ilbert space}.
\bjtitle{Theory Probab. Appl.}
\bvolume{59},
\bfpage{328}--\blpage{339}
(\byear{2015})
\bid{doi={10.1137/\\S0040585X97T987119}, mr={3416054}}
\end{barticle}
%
%
\OrigBibText
%
\begin{barticle}
\bauthor{\bsnm{Radchenko}, \binits{V.}}:
\batitle{Evolution equations driven by general stochastic measures in {H}ilbert space}.
\bjtitle{Theory Probab. Appl.}
\bvolume{59},
\bfpage{328}--\blpage{339}
(\byear{2015})
\end{barticle}
%
\endOrigBibText
\bptok{structpyb}%
\endbibitem

\bibitem{radt18}
%
\begin{barticle}
\bauthor{\bsnm{Radchenko}, \binits{V.}}:
\batitle{Fourier series expansion of stochastic measures}.
\bjtitle{Theory Probab. Appl.}
\bvolume{63},
\bfpage{389}--\blpage{401}
(\byear{2018})
\bid{mr={3796494}}
\end{barticle}
%
%
\OrigBibText
%
\begin{barticle}
\bauthor{\bsnm{Radchenko}, \binits{V.}}:
\batitle{Fourier series expansion of stochastic measures}.
\bjtitle{Theory Probab. Appl.}
\bvolume{63},
\bfpage{389}--\blpage{401}
(\byear{2018})
\end{barticle}
%
\endOrigBibText
\bptok{structpyb}%
\endbibitem

\bibitem{radt06}
%
\begin{barticle}
\bauthor{\bsnm{Radchenko}, \binits{V.M.}}:
\batitle{Parameter-dependent integrals with respect to general random measures}.
\bjtitle{Theor. Probability and Math. Statist.}
\bvolume{75},
\bfpage{161}--\blpage{165}
(\byear{2007})
\bid{doi={10.1090/S0094-9000-08-00722-9}, mr={2321189}}
\end{barticle}
%
%
\OrigBibText
%
\begin{barticle}
\bauthor{\bsnm{Radchenko}, \binits{V.M.}}:
\batitle{Parameter-dependent integrals with respect to general random measures}.
\bjtitle{Theor. Probability and Math. Statist.}
\bvolume{75},
\bfpage{161}--\blpage{165}
(\byear{2007})
\end{barticle}
%
\endOrigBibText
\bptok{structpyb}%
\endbibitem

\bibitem{RadStef}
%
\begin{barticle}
\bauthor{\bsnm{Radchenko}, \binits{V.M.}},
\bauthor{\bsnm{Stefans'ka}, \binits{N.O.}}:
\batitle{Fourier and {F}ourier--{H}aar series for stochastic measures}.
\bjtitle{Teor. Imovir. Matem. Statist.}
\bvolume{96},
\bfpage{155}--\blpage{162}
(\byear{2017})
\end{barticle}
%
%
\OrigBibText
%
\begin{barticle}
\bauthor{\bsnm{Radchenko}, \binits{V.M.}},
\bauthor{\bsnm{Stefans'ka}, \binits{N.O.}}:
\batitle{Fourier and {F}ourier--{H}aar series for stochastic measures}.
\bjtitle{Teor. Imovir. Matem. Statist.}
\bvolume{96},
\bfpage{155}--\blpage{162}
(\byear{2017})
\end{barticle}
%
\endOrigBibText
\bptok{structpyb}%
\endbibitem

\bibitem{rad16}
%
\begin{barticle}
\bauthor{\bsnm{Radchenko}, \binits{V.M.}},
\bauthor{\bsnm{Stefans'ka}, \binits{N.O.}}:
\batitle{Fourier transform of general stochastic measures}.
\bjtitle{Theory Probab. Math. Statist.}
\bvolume{94},
\bfpage{151}--\blpage{158}
(\byear{2017})
\bid{doi={10.1090/\\tpms/1015}, mr={3553460}}
\end{barticle}
%
%
\OrigBibText
%
\begin{barticle}
\bauthor{\bsnm{Radchenko}, \binits{V.M.}},
\bauthor{\bsnm{Stefans'ka}, \binits{N.O.}}:
\batitle{Fourier transform of general stochastic measures}.
\bjtitle{Theory Probab. Math. Statist.}
\bvolume{94},
\bfpage{151}--\blpage{158}
(\byear{2017})
\end{barticle}
%
\endOrigBibText
\bptok{structpyb}%
\endbibitem

\bibitem{samtaq}
%
\begin{bbook}
\bauthor{\bsnm{Samorodnitsky}, \binits{G.}},
\bauthor{\bsnm{Taqqu}, \binits{M.}}:
\bbtitle{Stable Non-Gaussian Random Processes}.
\bpublisher{Chapman and Hall},
\blocation{London}
(\byear{1994})
\bid{mr={1280932}}
\end{bbook}
%
%
\OrigBibText
%
\begin{bbook}
\bauthor{\bsnm{Samorodnitsky}, \binits{G.}},
\bauthor{\bsnm{Taqqu}, \binits{M.}}:
\bbtitle{Stable Non-Gaussian Random Processes}.
\bpublisher{Chapman and Hall},
\blocation{London}
(\byear{1994})
\end{bbook}
%
\endOrigBibText
\bptok{structpyb}%
\endbibitem

\bibitem{tudor09}
%
\begin{barticle}
\bauthor{\bsnm{Tudor}, \binits{C.}}:
\batitle{On the {W}iener integral with respect to a sub-fractional {B}rownian
motion on an interval}.
\bjtitle{Journal of Mathematical Analysis and Applications}
\bvolume{351}(\bissue{1}),
\bfpage{456}--\blpage{468}
(\byear{2009})
\bid{doi={10.1016/j.jmaa.2008.10.041}, mr={2472957}}
\end{barticle}
%
%
\OrigBibText
%
\begin{barticle}
\bauthor{\bsnm{Tudor}, \binits{C.}}:
\batitle{On the {W}iener integral with respect to a sub-fractional {B}rownian
motion on an interval}.
\bjtitle{Journal of Mathematical Analysis and Applications}
\bvolume{351}(\bissue{1}),
\bfpage{456}--\blpage{468}
(\byear{2009})
\end{barticle}
%
\endOrigBibText
\bptok{structpyb}%
\endbibitem

\bibitem{vahtar}
%
\begin{bbook}
\bauthor{\bsnm{Vakhania}, \binits{N.N.}},
\bauthor{\bsnm{Tarieladze}, \binits{V.I.}},
\bauthor{\bsnm{Chobanian}, \binits{S.A.}}:
\bbtitle{Probability Distributions on Banach Spaces}.
\bpublisher{D. Reidel Publishing Co.},
\blocation{Dordrecht}
(\byear{1987})
\bid{doi={\\10.1007/978-94-009-3873-1}, mr={1435288}}
\end{bbook}
%
%
\OrigBibText
%
\begin{bbook}
\bauthor{\bsnm{Vakhania}, \binits{N.N.}},
\bauthor{\bsnm{Tarieladze}, \binits{V.I.}},
\bauthor{\bsnm{Chobanian}, \binits{S.A.}}:
\bbtitle{Probability Distributions on Banach Spaces}.
\bpublisher{D. Reidel Publishing Co.},
\blocation{Dordrecht}
(\byear{1987})
\end{bbook}
%
\endOrigBibText
\bptok{structpyb}%
\endbibitem

\bibitem{zygmund}
%
\begin{bbook}
\bauthor{\bsnm{Zygmund}, \binits{A.}}:
\bbtitle{Trigonometric Series}.
\bpublisher{Cambridge University Press},
\blocation{Cambridge}
(\byear{2002})
\bid{mr={0236587}}
\end{bbook}
%
%
\OrigBibText
%
\begin{bbook}
\bauthor{\bsnm{Zygmund}, \binits{A.}}:
\bbtitle{Trigonometric Series}.
\bpublisher{Cambridge University Press},
\blocation{Cambridge}
(\byear{2002})
\end{bbook}
%
\endOrigBibText
\bptok{structpyb}%
\endbibitem

\end{thebibliography}
\end{document}